\newtheorem{theorem}{Theorem}[section]
\newtheorem{definition}[theorem]{Definition}
\newtheorem{remark}[theorem]{Remark}
\newtheorem{proposition}[theorem]{Proposition}
\newtheorem{corollary}[theorem]{Corollary}
\newtheorem{algorithm}[theorem]{Algorithm}
\newtheorem{lemma}[theorem]{Lemma}
\newtheorem*{lemma*}{Lemma}
\DeclarePairedDelimiter\floor{\lfloor}{\rfloor}
\DeclarePairedDelimiter\set{\{}{\}}
\numberwithin{equation}{section}
\begin{document}

\title{On card guessing game with one time riffle shuffle and complete feedback}

\author[Pengda Liu]{Pengda Liu}
\date{March 2020}
\address{Department of Mathematics, Stanford University, Stanford, CA 94305} 

\email{pengda@stanford.edu}

\subjclass[2010]{60C05, 65C50}

\keywords{card guessing, dovetail shuffle, asymptotic analysis}

\begin{abstract} This paper studies the game of guessing riffle-shuffled cards with  complete feedback. A deck of $n$ cards labelled 1 to $n$ is riffle-shuffled  once and placed on a table. A player tries to guess the cards from top and is given complete feedback after each guess. The goal is to find the guessing strategy with maximum reward (expected number of correct guesses). We give the optimal strategy for this game and prove that the maximum expected reward is $n/2+\sqrt{2/\pi}\cdot\sqrt{n}+O(1)$, partially solving an open problem of Bayer and Diaconis \cite{lair}.
\end{abstract}

\maketitle

\section{Introduction}
Consider the following game in which a deck of $n$ cards is shuffled according to some method. A player is then asked to guess the shuffled deck from top down. Each round, the player only guesses the card at the top of the deck and receives certain feedback after the guess. Then the top card is removed and the player continues to guess the next card until the deck is empty. The \textit{reward} of the game is the total number of correct guesses. The goal is  to find the optimal guessing strategy so that the expected reward is maximized. The best strategy would then depend on the \{shuffling method, feedback\} combination. This problem is  studied in \cite{graham} in the framework of sequential experiments and is shown to have great applications in statistical testings like taste testing and partially randomized clinical trials. Three common feedbacks that they consider are
\begin{enumerate}[.]
\item 
\textbf{zero feedback\\}
In this scenario, no feedback of any kind is given.
\item
\textbf{correctness feedback\\}
In this scenario, after each guess, the player is told whether the guess is correct or not.
\item
\textbf{complete feedback\\}
In this scenario, after each guess, the correct card is revealed to the player.
\end{enumerate}
They mostly focus on the uniform shuffling setting. In this paper, we consider the game of \textit{dovetail shuffle} (also called \textit{riffle shuffle} and we will use the two names interchangeably) with complete feedback. In  \cite{lair} Bayer and Diaconis  raised  the open problem of deciding the optimal guessing strategy with complete feedback when a deck has been riffle shuffled an arbitrary number of times and calculating the expected reward. We make the first known progress towards this problem by completely solving the case where the cards are  riffle shuffled once. This already requires new techniques and the asymptotic formula for the expected reward appears to be highly non-trivial. We present our main results next.
\subsection{Main Results}
Consider the following algorithm for playing the game.
\begin{algorithm}
\label{riffle_algo}
\begin{enumerate}[1.]
\item (guessing consecutive stage) Guess 1 at the first step. After that, if the previously revealed card sequence $[1,2,...,m]$ is still consecutive, then we guess the next to be card $m+1$.
\item
(separating pile stage) Suppose that when card $k$ is shown, the revealed sequence $1,2...m,k$ becomes non-consecutive, 
separate the remaining cards into two sequences $A=\set{m+1,...,k-1}$ and $B=\set{k+1,...,n}$.
\item
(guess interleave stage) Each time guess the first card in the longer sequence and cross the revealed card from its sequence.
\end{enumerate}
\end{algorithm}
\begin{remark}
When $m+1>k-1$, we let $A$ to be empty and if $k+1>n$, we let $B$ to be empty.
\end{remark}
We show that the algorithm is optimal and give its expected return as the next theorem.
\begin{theorem}
\label{riffle_main}
Algorithm~\ref{riffle_algo} is the optimal strategy for guessing  a deck of $n$ cards riffle shuffled once with complete feedback and has expected reward $\frac{1}{2}\cdot n+\sqrt{2/\pi}\cdot\sqrt{n}+O(1)$.
\end{theorem}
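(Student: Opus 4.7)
The plan is to first establish optimality of Algorithm~\ref{riffle_algo} by a maximum a posteriori (MAP) argument, and then to evaluate the expected reward asymptotically by combining an explicit formula for the interleave-stage value function with a marginalization trick and Stirling's formula.

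\emph{Optimality.} The crucial feature of complete feedback is that the sequence $c_1,c_2,\dots$ of revealed cards is generated by the shuffle alone, so it is independent of the player's guesses. Hence for any strategy,
\[ \E[\text{reward}] = \sum_{t=1}^n \E[P(c_t = g_t \mid \mathcal{F}_{t-1})], \quad \mathcal{F}_{t-1} = \sigma(c_1,\dots,c_{t-1}), \]
and since $g_t$ is $\mathcal{F}_{t-1}$-measurable and does not affect the law of future reveals, each term is maximized independently by the MAP estimate. It remains to verify that Algorithm~\ref{riffle_algo} is always MAP. Using that in the GSR model each binary pattern $b\in\{0,1\}^n$ (recording which pile supplies each slot) has probability $2^{-n}$, one finds that after history $1,2,\dots,m$ the card $m+1$ has posterior probability $[(1/2)^{m+1}+(m+1)/2^n]/[(1/2)^m+m/2^n]\ge 1/2$, strictly dominating every alternative, whose probability is bounded by $\binom{n-m-1}{\lfloor(n-m-1)/2\rfloor}/[(1/2)^m+m/2^n]\cdot 2^{-n}$. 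In the interleave stage, the remaining cards form a uniform interleaving of the two piles, and the top of the longer pile is MAP, correct with probability $\max(|A|,|B|)/(|A|+|B|)\ge 1/2$.

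\emph{Decomposition and value recursion.} Let $M$ denote the length of the initial consecutive run and $(|A|,|B|)$ the pile sizes entering the interleave stage, and let $V(a,b)$ be the optimal expected reward from interleave state $(a,b)$, satisfying $V(a,b)=\max(a,b)/(a+b)+\tfrac{a}{a+b}V(a-1,b)+\tfrac{b}{a+b}V(a,b-1)$. Then $\E[\text{reward}]=\E[M]+\E[V(|A|,|B|)\mathbf{1}(M<n)]$. A tail-sum calculation using $P(M\ge k)=k/2^n+2^{-k}$ gives $\E[M]=1+O(n^2/2^n)$. Observing that the number $a^{(s)}$ of pile-$1$ cards among the $s$ still-unrevealed cards is $\text{Hypergeometric}(a+b,a,s)$ and using $\max(x,y)=(x+y+|x-y|)/2$, one obtains
\[ V(a,b) = \frac{a+b}{2} + \frac{1}{2}\sum_{s=1}^{a+b}\frac{\E[|2a^{(s)}-s|]}{s}. \]

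\emph{Marginalization and asymptotics.} Conditional on $M=m$, $|A|$ is distributed as $\text{Bin}(n-m-1,1/2)$ restricted to $\{\ge 1\}$, an exponentially small correction. The key identity: a Hypergeometric draw from a $\text{Bin}(N,1/2)$ population is $\text{Bin}(s,1/2)$; hence if $|A|\sim\text{Bin}(N,1/2)$ with $N=n-m-1$, then marginally $a^{(s)}\sim\text{Bin}(s,1/2)$, giving
\[ \E[V(|A|,N-|A|)\mid M=m]=\frac{N}{2}+\frac{1}{2}\sum_{s=1}^N\frac{\E[|2Y_s-s|]}{s}+o(1),\quad Y_s\sim\text{Bin}(s,1/2). \]
Stirling yields $\E[|2Y_s-s|]=s\binom{s-1}{\lfloor(s-1)/2\rfloor}/2^{s-1}=\sqrt{2s/\pi}(1+O(1/s))$, so the sum equals $2\sqrt{2N/\pi}+O(1)$. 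Thus $\E[V(|A|,|B|)\mid M=m]=N/2+\sqrt{2N/\pi}+O(1)$; averaging against the exponentially-decaying weights $P(M=m)=\Theta(2^{-m})$ (which concentrates $M$ on $O(1)$, so $N\approx n$) gives $\E[V(|A|,|B|)\mathbf{1}(M<n)]=n/2+\sqrt{2n/\pi}+O(1)$. Combined with $\E[M]=1+o(1)$, the total reward equals $n/2+\sqrt{2n/\pi}+O(1)$.

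\emph{Main obstacle.} The hardest part is turning this scheme into rigorous $O(1)$-sharp bounds: establishing the marginalization identity despite the $|A|\ge 1$ conditioning, obtaining Stirling estimates uniform in $s$ (including the boundary regimes $s=O(1)$ and $s\approx N$) so the accumulated error across $s=1,\dots,N$ stays $O(1)$, and controlling the discrepancy between $\sqrt{2N/\pi}$ and $\sqrt{2n/\pi}$ when averaging against the $M$-distribution so that the sum-over-$m$ correction is also $O(1)$ rather than growing.
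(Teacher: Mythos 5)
Your proposal is correct in substance and reaches the right constant, but it takes a genuinely different route from the paper. The optimality half is essentially identical: your posterior $\big[(1/2)^{m+1}+(m+1)/2^n\big]/\big[(1/2)^m+m/2^n\big]>1/2$ is exactly the paper's lemma on the consecutive stage, and the interleave-stage MAP argument is the same. For the expected reward, the paper conditions only on the \emph{first} card and sets up a recursion in $n$: it proves a total-variation lemma showing that the conditional shuffle given that the first card is $1$ is within $o(1)$ of a riffle shuffle of $n-1$ cards, and then solves two nested recursions, one for $S(N)=\sum_k\binom{N}{k}f(k,N-k)$ and one for $G(n)=R^*(f_n)$. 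You instead condition once on the full run length $M$ and the pile sizes, obtain a closed summation form for the interleave value $V(a,b)$ by linearity of expectation plus exchangeability (the hypergeometric composition of the unrevealed cards), and collapse the binomial average over $|A|$ using the identity that a hypergeometric sample from a $\mathrm{Bin}(N,1/2)$ population is $\mathrm{Bin}(s,1/2)$. Both routes reduce to the same asymptotic kernel, $\sum_s \E\big[|2Y_s-s|\big]/s$ with $Y_s\sim\mathrm{Bin}(s,1/2)$, which is the paper's $\sum_k a_k=\sqrt{8n/\pi}+O(1)$ via Stirling; your De Moivre-type formula $\E|2Y_s-s|=s\binom{s-1}{\lfloor(s-1)/2\rfloor}/2^{s-1}$ is the correct bridge. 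What your route buys is that it avoids both the recursion-solving and the TV-coupling lemma, and it makes transparent where $\sqrt{2n/\pi}$ comes from (accumulated mean absolute deviations of binomials); what the paper's recursive formulation buys is reusability (it is the form exploited in the open-problems section for the second moment) and closer alignment with a hoped-for generalization to more shuffles. The caveats you flag at the end (the $|A|\ge 1$ conditioning, uniformity of the Stirling error over $s$, and averaging $\sqrt{n-M-1}$ against the geometric tail of $M$) are all routinely $O(1)$-controllable, so they are polish rather than gaps.
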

This will be proved in section 3 and next we introduce some notations to be used in the paper.
\subsection{Notations}
For an event or statement $A$, we use $\delta(A)$ to denote the indicator function. For two functions $f(n)$ and $g(n)$ of non-negative integers, $f\sim g$ means $\lim_{n\rightarrow\infty} f/g=1$, $f=O(g)$ means  $|f|\leq C|g|$ for some universal constant $C$ and $f=o(g)$ means $\lim_{n\rightarrow\infty} f/g=0$. For two probability distributions $P_1$ and $P_2$ on a finite set $\Omega$, the \textit{total variation distance} $TV(P_1, P_2)$ is defined to be
$$TV(P_1,P_2)=\frac{1}{2}\sum_{\omega\in\Omega}|P_1(\omega)-P_2(\omega)|.$$
An equivalent definition is 
$$TV(P_1,P_2)=\frac{1}{2}\sup_{f:\Omega\rightarrow [-1,1]}\set{\sum_{\omega\in\Omega}P_1(\omega)f(\omega)-\sum_{\omega\in\Omega} P_2(\omega)f(\omega)}.$$
We let $[n]$ denote the sequence $1,2,...,n$ and use $S_n$ for the permutation group of $[n]$.
For a sequence $a=a_1,...,a_n$ that are distinct numbers in $[n]$, its \textit{rising sequence} is a maximal increasing consecutive subsequence. Let $rs(a)$ denote the number of rising sequences of $a$. For example, for $a=[1,4,2,5,3,6]$, its rising sequences are $[1,2,3]$ and $[4,5,6]$, thus $rs(a)=2$. For any $\pi\in S_n$, let $rs(\pi)=rs(\pi([n]))$ to be the number of rising sequences of the permutation $\pi$. We will particularly care about the permutations with 2 rising sequences and let $R_n=|\set{\pi\in S_n:rs(\pi)=2}|$.
\subsection{Formulation of shuffling}
Throughout this paper we assume that initially a deck of $n$ cards are laid and labeled as $1,2,...,n$ from top to bottom. A given shuffling method is an algorithm $f$ that upon query outputs a shuffled sequence $(c_1,...,c_n)$ representing that from top to bottom, the shuffled cards are $c_1,...,c_n$. We formulate  such algorithm $f$ as 
follows. First $f$ is a random permutation and thus defines a probability distribution $Q_{f}$ over the symmetry group $S_n$. Each time when we query the algorithm $f$, it samples a permutation $\pi$ from the symmetry group $S_n$ with respect to $Q_f$ and  applies $\pi([n])$ to get the shuffled deck. When there is an need, we use $f_n$ to denote that this algorithm is for $n$ cards. With such formulation,  $f$ also induces an $n$ by $n$ transition matrix $P^f$ where $P^f_{ij}$ is the probability that  card $i$ ends up at position $j$ after the shuffle. When the shuffling method $f$ used in context is unambiguous, we just use $Q$ and $P$.
\subsection{High level proof idea}
If the player guesses with complete feedback, let $\mathcal{U}$ be the set of all sequences of length no more than $n$, then the optimal strategy is a map $\lambda:\mathcal{U}\rightarrow [n]$ such that 
$$\lambda(x_{1}...x_{m-1})=\arg\max_a\Pr(x_m=a|x_{1}...x_{m-1})\hspace{0.5cm}\forall x_{1}....x_{m-1}\in\mathcal{U}.$$
Let $R^*(\pi, f)$ be the reward with this optimal strategy when the sampled permutation is $\pi$, then we have the maximum expected reward satisfies
\begin{equation}
\label{direct}
R^*(f)=\sum_{\pi\in S_n}R^*(\pi, f)Q_f(\pi).
\end{equation}
Directly computing these conditional probabilities and summing over all $S_n$ may be applicable to some cases like uniform shuffle. For other cases, it may be  hard to compute all the conditional probabilities to arrive at an operational description of the optimal strategy and we sometimes we need to adopt the following recursive approach. The best guess for the first card is always $\max_{j}\set{P^f_{j1}}$. After the first card is revealed to be $i$, we can think of the remaining $n-1$ cards as being shuffled according to a conditional shuffle $f^{(i)}$ and 
\begin{equation}
\label{recursive}
R^*(f)=\max_{j}\set{P^f_{j1}}+\sum_{i=1}^{n}P^f_{i1}R^*(f^{(i)}).
\end{equation}\\
This is exactly the main idea behind the proof---the riffle shuffle can be approximated by another shuffling method with recursive structure.
\section{Related Work}
\subsection{The mathematical model for riffle shuffle}
Dovetail shuffle is a common shuffling method used in real life. The shuffler first divides the deck into two piles and then drops the cards from the two piles to make a new shuffled pile. We will use the following GSR model introduced by Gilbert and Shannon (see
Gilbert (1955)) and  Reeds (1981) to mathematically describe the dovetail shuffling algorithm.

\textbf{GSR(Gilbert-Shannon-Reeds) Model}
\begin{enumerate}[(1)]
\item
The  deck of $n$ cards is cut at a position  into two piles randomly according to the binomial distribution such that the probability of cutting it at position $k$ (where $0\leq k\leq n$)  is $\frac{\binom {n}{k}}{2^n}$. After this step, we end up with two piles  of cards $A,B$ of size $k$ and $n-k$.
\item
Start with the shuffled pile $C$ being empty and 
drop the cards from the bottom of  $A$ and $B$
onto $C$ one at a time in the following way. At each time, let $x$ be the number of remaining cards in pile $A$ and $y$ be the number of remaining cards in pile $B$, drop a  card from pile $A$ with probability $\frac{x}{x+y}$ and a card from pile $B$ with probability $\frac{y}{x+y}$.
\end{enumerate}

Diaconis \cite{group} gives an analysis showing that the GSR model is a
good model for riffle shuffle  in real life.  Diaconis \cite{dev} briefly mentioned another way to interpret step (2) without proof, we prove it here for completeness as the following lemma.
\begin{lemma}
\label{interleave_lemma}
Step (2) is equivalent to interleaving the two piles uniform randomly.
\end{lemma}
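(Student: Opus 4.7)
The plan is to show that under Step (2) each of the $\binom{n}{k}$ possible interleavings of the two fixed piles $A$ and $B$ arises with probability exactly $1/\binom{n}{k}$. Since these are the only possible outcomes, this equality is precisely the statement of the lemma.

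Concretely, I would fix an arbitrary interleaving pattern $s = (s_1,\ldots,s_n) \in \{A,B\}^n$ having $k$ coordinates equal to $A$ and $n-k$ equal to $B$; here $s_i$ records which pile contributes the $i$-th card dropped onto $C$. I would then compute the probability that Step (2) produces exactly this pattern as the product of the $n$ conditional drop probabilities. At step $i$ the total number of remaining cards is $n-i+1$, so the denominators of these probabilities telescope to $n!$. For the numerators, at each step with $s_i = A$ the factor is the number of remaining $A$-cards at that moment; as $i$ ranges over the $k$ $A$-slots of $s$, this factor runs through $k, k-1, \ldots, 1$ in order (regardless of where the $A$-slots sit inside the pattern), contributing product $k!$. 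The same reasoning yields a contribution of $(n-k)!$ from the $B$-steps. Hence the probability of any fixed pattern $s$ equals $k!(n-k)!/n! = 1/\binom{n}{k}$.

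The only subtlety worth stating explicitly is the reason this really gives uniform interleaving rather than just uniform distribution over abstract $A/B$-patterns: once a pile is chosen at step $i$, the card dropped is deterministic (namely, the bottom card of that pile), so the map from patterns $s$ to resulting shuffled decks is a bijection and pushes the uniform distribution on patterns to the uniform distribution on interleavings of $A$ and $B$. There is no serious obstacle in this argument—it is a short bookkeeping calculation whose only delicate point is noticing that the $A$-numerators, read off in the temporal order of Step (2), always run $k, k-1, \ldots, 1$ no matter where the $A$-slots of $s$ are positioned.
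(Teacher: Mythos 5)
Your proof is correct, and it takes a somewhat different route from the paper's. The paper argues by induction on the length of the prefix $x_1,\ldots,x_m$ of the shuffled pile, showing that the conditional probability of the next card under Step (2) and under uniform interleaving are both $a'/(a'+b')$, and concluding that all prefix probabilities (hence the full distributions) agree. You instead fix a complete $A/B$-pattern and compute its probability under Step (2) directly as a telescoping product, $k!\,(n-k)!/n! = 1/\binom{n}{k}$, then observe that the pattern-to-deck map is a bijection. The two arguments rest on the same elementary fact (the drop probability $x/(x+y)$), but yours is more compact and has the bonus of delivering the exact value $1/\binom{n}{k}$ immediately, which is precisely the paper's Corollary~\ref{interleave_corollary}; the paper's induction, by contrast, establishes along the way the matching of conditional next-card probabilities given a partial reveal, a form of the statement it reuses later in the optimality proof (Proposition~\ref{algorithm_optimal}). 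Your observation that the $A$-numerators always run through $k, k-1, \ldots, 1$ regardless of where the $A$-slots sit, and your explicit bijection remark, are exactly the points that need to be made, so there is no gap.
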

\begin{proof}
Suppose initially pile $A$ has $a$ cards and pile $B$ has $b$ cards.
For a sequence $x_1...x_m$, let $P_1(.)$ be the probability that, with step (2), this sequence ends up being the first $m$ cards in the shuffled pile from bottom to top and let $P_2$ be defined similarly if we interleave the two piles uniformly. Define $P_1$ and $P_2$ both be 1 when $m=0$. We prove by induction on $m$ that $P_1(x_1....x_m)=P_2(x_1....x_m)$.\\
The base case $m=0$ holds by definition. Now suppose the result holds for all sequences of length $m$, consider any sequence $x_1,...,x_{m+1}$ that could be the layout of the first $m+1$ cards of the shuffled deck, that is, $rs(x_1,...,x_{m+1})\leq 2$. Let 
$$a'=|A\setminus\set{x_1,...,x_m}|,b'=|B\setminus\set{x_1,...,x_m}|.$$
Let $P_1(x_{m+1}|x_1,...,x_m)$ denote the probability that the $m+1^{th}$ card being $x_{m+1}$ conditioning on the first $m$ cards being $x_1,...,x_m$. Note that conditioning on step 1, $x_{m+1}$ can assume only two values, either the first card from pile $A$ or the first card from pile $B$. WLOG, suppose $x_{m+1}$ is from pile $A$, then we have
$$P_1(x_1...x_{m+1})=P_1(x_{m+1}|x_1,...,x_m)P_1(x_1,...,x_m)=\frac{a'}{a'+b'}P_1(x_1,...,x_m).$$
Now consider the case where we interleave the two piles uniformly, given that the first $m$ cards are $x_1,...,x_m$, there are $\binom{a'+b'}{a'}$ ways to interleave the remaining cards and $\binom{a'+b'-1}{a'-1}$ ways to interleave if the next card is $x_{m+1}$, thus
\begin{align*}
&P_2(x_{m+1}|x_1,...,x_m)=\frac{\binom{a'+b'-1}{a'-1}}{\binom{a'+b'}{a'}}=\frac{a'}{a'+b'}\\
\Rightarrow &P_2(x_1,...,x_{m+1})=P_2(x_{m+1}|x_1,...,x_m)P_2(x_1,...,x_m)=\frac{a'}{a'+b'}P_2(x_1,...,x_m).   
\end{align*}
By induction, we have $P_1(x_1,...,x_{m+1})=P_2(x_1,...,x_{m+1})$ thus step (2) is equivalent to interleave the two piles uniformly.
\end{proof}
Given a deck already cut, the probability of each of the configuration is given by the following corollary.
\begin{corollary}
\label{interleave_corollary}
If the deck is cut at position $k$ , the resulting deck's configuration is one of the possible interleavings each with probability $\frac{1}{\binom{n}{k}}$.
\end{corollary}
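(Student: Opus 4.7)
The plan is to use Lemma~\ref{interleave_lemma} and reduce the claim to a straightforward counting argument. After the cut, we have a pile $A$ of size $k$ (consisting of cards $1,\dots,k$ in that order from top to bottom) and a pile $B$ of size $n-k$ (consisting of cards $k+1,\dots,n$ in that order). An \emph{interleaving} of the two piles is any arrangement of the $n$ cards that preserves the internal relative order of each pile. Such an interleaving is uniquely determined by specifying which $k$ of the $n$ positions in the shuffled pile $C$ are filled by cards from $A$ (the remaining $n-k$ positions are then forced to be filled, in order, by cards from $B$). Hence the number of possible interleavings is exactly $\binom{n}{k}$.

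Next, I would invoke Lemma~\ref{interleave_lemma}, which states that step~(2) of the GSR model is equivalent to interleaving $A$ and $B$ uniformly at random. By definition of uniform interleaving, each of the $\binom{n}{k}$ interleavings occurs with equal probability, which must therefore be $\frac{1}{\binom{n}{k}}$. This yields the corollary.

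There is no real obstacle here beyond making the counting precise; the substance of the argument lies in Lemma~\ref{interleave_lemma}, which has already been proved. The only subtle point worth noting is that the $\binom{n}{k}$ interleavings are genuinely distinct configurations of the shuffled deck (since the cards have distinct labels and the relative orders within $A$ and within $B$ are fixed), so there is no overcounting when we divide the total probability among them.
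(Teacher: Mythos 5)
Your argument is correct and matches the paper's (implicit) reasoning: the paper states this corollary as an immediate consequence of Lemma~\ref{interleave_lemma}, and you have simply supplied the routine count that there are exactly $\binom{n}{k}$ distinct interleavings, each receiving equal probability under the uniform interleaving. Nothing is missing.
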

The GSR model gives an operational description of riffle shuffle, now we provide its distribution $Q$ and transition matrix $P$. First from the operational description we note that every permutation $\pi$ in the support of $Q$ satisfies $rs(\pi)=1$ or 2, that is, it is either the identity or has exactly 2 rising sequences. Bayer and Diaconis \cite{lair}  proves a more general result about $a$-shuffle where the original deck is separated into $a$ packs instead of 2. Here we use the version of $a=2$ and a simpler proof is provided for completeness.
\begin{theorem}
\label{riffle_Q}
For dovetail shuffle, the distribution $Q$ on $S_n$ satisfies
$$
Q(\pi)=
     \begin{cases}
       \frac{n+1}{2^n}& \text{if } \pi = id\\
       \frac{1}{2^n} & \text{if } rs(\pi)=2 \\
       0& \text{otherwise}.\\
     \end{cases}
$$
\end{theorem}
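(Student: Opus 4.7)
The plan is to compute $Q(\pi)$ by conditioning on the cut position $k$ from the first step of the GSR model and then invoking Corollary \ref{interleave_corollary}, which says that given the cut at $k$, every one of the $\binom{n}{k}$ possible interleavings is equally likely. Thus for any $\pi$, $Q(\pi) = \sum_{k=0}^{n} (\binom{n}{k}/2^n)\cdot(N_k(\pi)/\binom{n}{k}) = 2^{-n}\sum_{k=0}^n N_k(\pi)$, where $N_k(\pi) \in \{0,1\}$ is the indicator that cutting at $k$ and then performing the unique appropriate interleaving yields $\pi$.

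First I would observe that because a riffle shuffle concatenates (by interleaving) two piles which are themselves rising sequences of consecutive labels $\{1,\dots,k\}$ and $\{k+1,\dots,n\}$, the output can have at most two rising sequences. So $Q(\pi)=0$ whenever $rs(\pi)\geq 3$, covering the third case.

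Next I would handle the generic case $rs(\pi)=2$. Here the two rising sequences of $\pi$ partition $[n]$ into two sets of consecutive integers, necessarily $\{1,\ldots,j\}$ and $\{j+1,\ldots,n\}$ for a unique $j$ determined by $\pi$. For the GSR process to produce $\pi$, the two piles must be exactly $A=\{1,\ldots,j\}$ and $B=\{j+1,\ldots,n\}$, forcing the cut position to be $k=j$; any other cut yields piles whose interleavings cannot possibly reproduce $\pi$. Given this cut, exactly one of the $\binom{n}{j}$ uniform interleavings produces $\pi$, so by Corollary \ref{interleave_corollary}, $Q(\pi) = \binom{n}{j}/2^n \cdot 1/\binom{n}{j} = 1/2^n$.

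Finally, for $\pi=\mathrm{id}$, every cut position $k \in \{0,1,\ldots,n\}$ is compatible: the identity interleaving (dropping all of $A$, then all of $B$) reconstructs the original deck. Hence $N_k(\mathrm{id})=1$ for all $n+1$ values of $k$, giving $Q(\mathrm{id}) = \sum_{k=0}^{n} 2^{-n} = (n+1)/2^n$. The main subtle point, which I would argue carefully, is the uniqueness of the cut $k=j$ in the $rs(\pi)=2$ case (so that we do not overcount), contrasted with the identity, where all $n+1$ cut positions contribute; as a sanity check one can verify $\sum_\pi Q(\pi)=1$, which recovers the identity $R_n = 2^n - n - 1$.
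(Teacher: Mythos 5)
Your proposal is correct and follows essentially the same route as the paper: condition on the cut position, use Corollary~\ref{interleave_corollary} to get probability $1/\binom{n}{k}$ per interleaving, and count the number of compatible cuts ($n+1$ for the identity, exactly one --- at the end $j$ of the first rising sequence --- when $rs(\pi)=2$, zero otherwise). The uniqueness argument you flag is exactly the point the paper makes by identifying the cut as the minimum of the ends of the two rising sequences.
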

\begin{proof}
Let $\Pr(\text{cut}_k)$ be the probability of cutting at position $k$ and  $\Pr(\pi|\text{cut}_k)$ be the probability of $\pi$ conditioning on that it is cut at $k$. By Lemma~\ref{interleave_lemma} and Corollary~\ref{interleave_corollary} we have
\begin{align*}
Q(\pi) & =\sum_{k=0}^{n}\Pr(\text{cut}_k)\Pr(\pi|\text{cut}_k) \\
&=\sum_{k=0}^n\frac{\binom{n}{k}}{2^n}\frac{1}{\binom{n}{k}}\delta(\text{there is an interleaving to generate }\pi([n])\text{ when cut at }k)\\
&=\frac{M(\pi)}{2^n}
\end{align*}
where $M(\pi)$ is the number of cuts that can generate $\pi([n])$.\\
Every cut could generate the identity, so  $M(id)=n+1$. For the other  $\pi$ such that $rs(\pi)=2$, exactly one cut could generate it (which is to cut it at the minimum of the ends of the two rising sequences), thus $M(\pi)=1$, so the theorem is proved.
\end{proof}
\begin{remark}
This theorem shows that the number of rising sequences is a sufficient statistics and this is true in general for $a$-shuffle as well. We also have that the entropy of $Q$ is $n\log2-\frac{n+1}{2^n}\log(n+1).$
\end{remark}
\begin{corollary}
\label{rs}
Recall that in section 1.2 we let $R_n$ denote the number of permutations with 2 rising sequences. From this theorem we have that $R_n=(1-Q(id))2^n=2^n-n-1$.
\end{corollary}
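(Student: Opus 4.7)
The plan is to exploit the fact that $Q$ is a genuine probability distribution on $S_n$, so its total mass is $1$, combined with the explicit form of $Q$ from Theorem~\ref{riffle_Q}. Since Theorem~\ref{riffle_Q} already enumerates the support of $Q$ completely (only the identity and the $R_n$ permutations with exactly two rising sequences have positive probability), the normalization condition becomes a single linear equation in $R_n$.

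Concretely, first I would write
\[
1 = \sum_{\pi \in S_n} Q(\pi) = Q(\mathrm{id}) + \sum_{\substack{\pi \in S_n \\ rs(\pi) = 2}} Q(\pi) = \frac{n+1}{2^n} + R_n \cdot \frac{1}{2^n},
\]
using Theorem~\ref{riffle_Q} for the values. Rearranging gives $R_n = 2^n - (n+1) = 2^n - n - 1$, and the identity $R_n = (1 - Q(\mathrm{id})) \cdot 2^n$ is immediate from the same rearrangement. There is essentially no obstacle here: the statement is a one-line bookkeeping consequence of Theorem~\ref{riffle_Q}, and the only thing one must verify is that Theorem~\ref{riffle_Q} indeed covers all cases, i.e.\ that every $\pi$ with $rs(\pi) \ge 3$ is excluded from the support. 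That follows from the GSR description, since a single riffle shuffle interleaves two rising piles and therefore can produce at most two rising sequences.
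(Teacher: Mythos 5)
Your proof is correct and matches the paper's (implicit) argument: the corollary is obtained exactly by using that $Q$ is a probability distribution together with the values from Theorem~\ref{riffle_Q}, so $1-Q(id)=R_n/2^n$ and hence $R_n=2^n-n-1$. Your added check that permutations with three or more rising sequences lie outside the support is also already noted in the paper just before Theorem~\ref{riffle_Q}.
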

In \cite{ciucu} Ciucu has made great contributions to guessing riffle shuffled deck with zero feedback in which he proves the following theorem
\begin{theorem}
\label{main_ciucu}
The optimal strategy for guessing  a deck of $n$ cards riffle shuffled once without feedback has expected reward $\frac{2}{\sqrt{\pi}}\cdot\sqrt{n}+O(1)$.
\end{theorem}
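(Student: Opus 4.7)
The plan is to exploit that without feedback the player's strategy is, up to a trivial averaging, a deterministic sequence of guesses $g_1,\dots,g_n$ chosen in advance, so that
\[
\E[\text{reward}] \;=\; \sum_{i=1}^{n}\Pr(c_i = g_i) \;\le\; \sum_{i=1}^{n}\max_{j}\Pr(c_i=j) \;=\; \sum_{i=1}^{n}\max_{j} P_{ji},
\]
with equality attained by the greedy choice $g_i = \arg\max_j P_{ji}$. Thus the theorem reduces to the asymptotic evaluation of the sum of column maxima of the transition matrix $P$ of a single riffle shuffle.

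I would next derive an explicit formula for $P_{ji}$ directly from the GSR model. Conditioning on the cut at $k$ and invoking Corollary~\ref{interleave_corollary}, the event $c_i = j$ corresponds to the $i$-th letter of a uniformly random interleaving of $k$ A's and $n-k$ B's being the $j$-th A (if $j\le k$) or the $(j-k)$-th B (if $j>k$), giving a ratio of binomials. Weighting by $\binom{n}{k}/2^n$ and summing over $k$, the $\binom{n}{k}$ factors cancel and the inner sum collapses via $\sum_m \binom{N}{m}=2^N$, leaving
\[
P_{ji} \;=\; \begin{cases} \binom{i-1}{j-1}/2^i & \text{if } j<i,\\ 2^{-i}+2^{i-1-n} & \text{if } j=i,\\ \binom{n-i}{j-i}/2^{n+1-i} & \text{if } j>i.\end{cases}
\]

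On each binomial branch $P_{ji}$ is unimodal in $j$, so the branch-maxima are the central binomials $\binom{i-1}{\lfloor(i-1)/2\rfloor}/2^i$ (left) and $\binom{n-i}{\lfloor(n-i)/2\rfloor}/2^{n+1-i}$ (right); the middle value $P_{ii}$ is exponentially small except at the boundaries $\min(i,n-i)=O(1)$, where it contributes only to an $O(1)$ remainder. The reversal symmetry $P_{ji}(n)=P_{n+1-j,\,n+1-i}(n)$ shows that the left branch dominates for $i\le n/2$ and the right branch for $i > n/2$. The de Moivre--Laplace estimate $\binom{m}{\lfloor m/2\rfloor}/2^m = \sqrt{2/(\pi m)}(1+O(1/m))$ then yields $\max_j P_{ji} = 1/\sqrt{2\pi(i-1)}\,(1+O(1/i))$ uniformly for $3\le i\le n/2$, with the mirror estimate on the other half.

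Finally Euler--Maclaurin gives $\sum_{i=3}^{n/2} 1/\sqrt{2\pi(i-1)} = \sqrt{n/\pi}+O(1)$; doubling via symmetry and absorbing the $O(1)$ boundary contributions produces the asserted $\frac{2}{\sqrt{\pi}}\sqrt{n}+O(1)$. The main obstacle I expect is ensuring the Stirling error in the de Moivre--Laplace step is uniform in $i$ up to the boundary regime, so that the small-$i$ tail (where the central-binomial asymptotic is weakest) together with the identity contribution trapped in the $j=i$ middle case can all be controlled within $O(1)$; once that uniformity is secured the remainder is combinatorial bookkeeping.
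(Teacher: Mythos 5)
The paper does not actually prove this statement: Theorem~\ref{main_ciucu} is imported verbatim from Ciucu \cite{ciucu}, and only the transition matrix (Theorem~\ref{riffle_P}) and the asymptotics of $A_n=\sum_i a_i$ (Lemma~\ref{sequence_sum}) are reused from that work. Your outline is a correct reconstruction of essentially the standard argument. The reduction to $\sum_{i}\max_j \Pr(c_i=j)$ is valid because with zero feedback the guesses are an unconstrained, non-adaptive sequence (they need not be distinct), so the per-position greedy choice is optimal and randomization cannot help; your matrix agrees with Theorem~\ref{riffle_P} after transposing indices. The branch maxima you identify are exactly $\tfrac12 a_{i-1}$ (left) and $\tfrac12 a_{n-i}$ (right) in the paper's notation of Definition~\ref{sequence_def}, so the column-maxima sum is $A_{\lceil n/2\rceil}+O(1)$, and Lemma~\ref{sequence_sum} gives $\sqrt{8/\pi}\cdot\sqrt{n/2}+O(1)=\tfrac{2}{\sqrt{\pi}}\sqrt{n}+O(1)$, confirming your constant without needing Euler--Maclaurin separately. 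The uniformity worry you flag at the end is not a real obstacle: the relative error in $\binom{m}{\lfloor m/2\rfloor}/2^m=\sqrt{2/(\pi m)}\,(1+O(1/m))$ makes the accumulated error $O(\sum_i i^{-3/2})=O(1)$, and the diagonal terms contribute at most $\sum_i(2^{-i}+2^{i-1-n})\le 2$; the only small caution is that for $i=1,2$ the diagonal can tie or beat the left branch, but this only affects which guess attains the maximum, not the $O(1)$ bookkeeping.
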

In his proof, the transition matrix $P$ for riffle shuffle is calculated and this plays a major role in our proof.
\begin{theorem}
\label{riffle_P}
For dovetail shuffle, the transition matrix $P$ satisfies
$$
P_{ij}=
     \begin{cases}
       \frac{1}{2^j}\binom{j-1}{j-i}& \text{if } i<j\\
       \frac{1}{2^n}(2^{j-1}+2^{n-j}) & \text{if } i=j \\
       \frac{1}{2^{n-j+1}}\binom{n-j}{i-j}& \text{if } i>j.\\
     \end{cases}
$$
\end{theorem}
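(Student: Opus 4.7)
The plan is to condition on the cut position $k$ and invoke Corollary~\ref{interleave_corollary}, which says that conditioned on a cut at $k$, each of the $\binom{n}{k}$ interleavings of the two piles $A=\{1,\ldots,k\}$ and $B=\{k+1,\ldots,n\}$ is equally likely. Letting $N_{ij}(k)$ denote the number of such interleavings that place card $i$ at position $j$, I get
$$P_{ij}=\sum_{k=0}^{n}\frac{\binom{n}{k}}{2^n}\cdot\frac{N_{ij}(k)}{\binom{n}{k}}=\frac{1}{2^n}\sum_{k=0}^{n}N_{ij}(k),$$
so the task reduces to counting interleavings case by case.

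For $i<j$, I would first note that card $i$ must lie in pile $A$: were it in pile $B$, all $k$ cards of $A$ and the $i-k-1$ cards of $B$ preceding $i$ would have to fill exactly $j-1$ slots, giving $k=j-i-1+k$, i.e.\ $j\le i$, a contradiction. So $k\ge i$, and card $i$ at position $j$ forces the first $j-1$ slots to contain cards $1,\ldots,i-1$ from $A$ and the first $j-i$ cards of $B$ in some interleaving ($\binom{j-1}{i-1}$ choices), while the last $n-j$ slots interleave the $k-i$ remaining $A$-cards with the remaining $B$-cards ($\binom{n-j}{k-i}$ choices). Summing $\sum_{k=i}^{n-j+i}\binom{n-j}{k-i}=2^{n-j}$ yields $P_{ij}=\binom{j-1}{i-1}/2^j=\binom{j-1}{j-i}/2^j$. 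For $i>j$, the cleanest route is to exploit the reversal symmetry of the GSR model: the cut is binomial, the interleaving uniform, so reversing both the input and the output gives the same law, yielding $P_{ij}=P_{n+1-i,\,n+1-j}$ and the formula follows from the already-proved case.

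The diagonal case $i=j$ needs a little more care because card $i$ can sit in either pile. If $k\ge i$ (card $i$ in $A$), card $i$ ends at position $i$ iff no $B$-card precedes it, which fixes the first $i$ slots and leaves $\binom{n-i}{k-i}$ interleavings of the remainder; if $k<i$ (card $i$ in $B$), the first $i-1$ slots must hold all $k$ cards of $A$ and the first $i-k-1$ cards of $B$, freely interleaved in $\binom{i-1}{k}$ ways, with the tail forced. Summing gives
$$P_{ii}=\frac{1}{2^n}\left[\sum_{k=i}^{n}\binom{n-i}{k-i}+\sum_{k=0}^{i-1}\binom{i-1}{k}\right]=\frac{2^{n-i}+2^{i-1}}{2^n},$$
matching the stated expression. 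The only genuine obstacle is the bookkeeping: pinning down which pile card $i$ inhabits in each regime and ensuring the ranges of $k$ in the sums are correct so that the binomial identities $\sum_m\binom{N}{m}=2^N$ apply without boundary adjustments.
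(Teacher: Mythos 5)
The paper itself gives no proof of Theorem~\ref{riffle_P}: it is quoted from Ciucu \cite{ciucu}, so there is nothing internal to compare against. Your derivation --- conditioning on the cut position, using Corollary~\ref{interleave_corollary} to reduce $P_{ij}$ to $\frac{1}{2^n}\sum_k N_{ij}(k)$, counting interleavings, and closing each case with $\sum_m\binom{N}{m}=2^N$ --- is a correct and self-contained proof that uses exactly the tools the paper has already set up (Lemma~\ref{interleave_lemma} and its corollary), and the reversal symmetry $P_{ij}=P_{n+1-i,\,n+1-j}$ is a clean way to dispatch the $i>j$ case. One local slip: in arguing that $i<j$ forces card $i$ into pile $A$, you write that ``all $k$ cards of $A$'' would have to precede card $i$ and display the equation $k=j-i-1+k$, which is not the right relation. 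The correct statement is that if card $i$ lies in pile $B$ then the cards preceding it are all $i-k-1$ earlier $B$-cards together with \emph{at most} $k$ cards of $A$, so $j-1\le (i-k-1)+k=i-1$, i.e.\ $j\le i$, contradicting $i<j$. With that sentence repaired the argument is complete; the remaining counts $\binom{j-1}{i-1}\binom{n-j}{k-i}$ for $i<j$ and $\binom{n-i}{k-i}$, $\binom{i-1}{k}$ for the diagonal are all correct, as are the ranges of $k$ over which you sum.
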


\section{Proof of Theorem~\ref{riffle_main}}
The proof consists of two parts. We first show that the algorithm is optimal in Proposition~\ref{algorithm_optimal} and prove its expected reward in Proposition ~\ref{algo_return}.
\begin{proposition}\label{algorithm_optimal}
Algorithm~\ref{riffle_algo} is the optimal strategy.
\end{proposition}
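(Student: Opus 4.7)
The plan is to exploit a fact about complete feedback that is already recorded in Section~1.5: the globally optimal strategy selects, at every step, a card $c$ maximizing $\Pr(\text{next card} = c \mid \text{history})$. Because the feedback reveals the true card independent of the player's choice, the conditional distribution of the unseen cards depends only on the revealed prefix, and the expected reward decomposes as a sum over steps with each term depending only on that step's guess. So the only thing to verify is that Algorithm~\ref{riffle_algo} always selects a posterior-argmax card.

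For the consecutive stage, given the prefix $1,2,\ldots,m$, I would compute the posterior directly from the GSR description via Lemma~\ref{interleave_lemma} and Corollary~\ref{interleave_corollary}. Splitting on the cut position $k'$: cuts with $k' \leq m$ each contribute $1/2^n$ to the joint probability (the interleaving pattern on the first $m$ positions is uniquely determined), and cuts with $k' \geq m+1$ contribute $\binom{n-m}{k'-m}/2^n$ (the first $m$ positions must be drops from pile $A$). Summing gives
\[
\Pr(\text{prefix} = 1,\ldots,m) = \frac{m}{2^n} + \frac{1}{2^m}.
\]
The same bookkeeping applied to the extended prefix $1,\ldots,m,c$ yields joint probability $\frac{m+1}{2^n} + \frac{1}{2^{m+1}}$ when $c = m+1$, and $\binom{n-m-1}{c-m-1}/2^n$ when $c \geq m+2$, the key point being that for $c \geq m+2$ only the cut $k' = c-1$ is consistent (since $c$ must be the top of pile $B$). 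Comparing posteriors, it suffices to check $(m+1) + 2^{n-m-1} > \binom{n-m-1}{c-m-1}$, which holds because any single binomial coefficient is strictly smaller than the row sum $2^{n-m-1}$. Thus $c = m+1$ is the unique posterior-argmax, matching the algorithm.

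For the interleave stage, once a card $k \geq m+2$ appears after the consecutive prefix, the same counting argument shows that the posterior concentrates on the cut $k-1$, so the unrevealed deck is a uniformly random interleaving of the two sorted piles $A = \{m+1,\ldots,k-1\}$ and $B = \{k+1,\ldots,n\}$. I would then prove a short lemma: for a uniform interleaving of two sorted piles of sizes $a$ and $b$, the top card of the deck is the top of pile $A$ with probability $a/(a+b)$ and the top of pile $B$ with probability $b/(a+b)$, and these are the only values with positive posterior; moreover, after the true top is revealed and removed from its pile, the remainder is still a uniform interleaving of the reduced piles. Both statements are immediate from the definition of uniform interleaving. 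Hence the myopic optimum at every subsequent step is the top of the longer pile, which is exactly the rule in Algorithm~\ref{riffle_algo}, and the interleaving property is preserved so an induction on the number of remaining cards completes the argument. The main obstacle is the careful enumeration in the consecutive stage---partitioning cuts into a ``small-cut'' regime where the prefix determines the interleaving pattern uniquely and a ``large-cut'' regime where many interleavings remain---since once that is set up correctly, every subsequent step is structural.
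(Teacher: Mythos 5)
Your proposal is correct and follows essentially the same route as the paper: reduce to myopic posterior-maximization (as set up in Section~1.5), verify the consecutive stage by computing $\Pr(\text{prefix}=1,\ldots,m)=\frac{m+2^{n-m}}{2^n}$ (you obtain this by enumerating cuts via Lemma~\ref{interleave_lemma}, the paper by counting permutations with two rising sequences via Theorem~\ref{riffle_Q}, but the numbers agree), and handle the interleave stage via the uniform-interleaving property exactly as the paper does. The only cosmetic difference is that you bound each competitor $c\ge m+2$ individually against $2^{n-m-1}$, whereas the paper's Lemma~\ref{consecutive} notes that the conditional probability of $m+1$ exceeds $1/2$ and is therefore automatically the unique mode.
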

In showing the optimality, we first prove a lemma explaining the \textit{guessing consecutive stage}.
\begin{lemma}
\label{consecutive}
Given a deck of $n$ cards riffle shuffled once, let the true cards be $a_1,....,a_n$ from top down, then we have
$$\arg\max_l \Pr(a_m=l|a_i=i\text{ for }1\leq i<m)=m.$$
\end{lemma}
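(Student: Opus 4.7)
The plan is to compute the conditional probability $\Pr(a_m = l \mid a_i = i \text{ for } i < m)$ directly by summing $Q(\pi)$ over all $\pi$ consistent with the conditioning, using Theorem~\ref{riffle_Q} for the weights. Since the denominator $\Pr(a_i = i \text{ for } i<m)$ does not depend on $l$, I would compare the numerators $N_v := 2^n \sum_{\pi:\pi(i)=i\,\forall i<m,\ \pi(m)=v} Q(\pi)$ across $v$, and show $N_m$ is the strict maximum.

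First I would classify the $\pi$ with $Q(\pi)>0$ satisfying $\pi(i)=i$ for all $i<m$. The identity qualifies, contributing weight $n+1$ to $N_m$. For a $2$-rise $\pi$, I invoke the GSR cut-and-interleave description: $\pi$ corresponds to a unique cut $k$ with pile $A=\{1,\ldots,k\}$ and pile $B=\{k+1,\ldots,n\}$, each pile appearing in order. Because every value in $B$ is $>k$, the only way for positions $1,\ldots,m-1$ to hold $1,\ldots,m-1$ is either (a) $k\ge m-1$ with the first $m-1$ drops taken from $A$, or (b) pile $A$ is exhausted during those positions, which forces the remaining drops to be $B$ in order and reproduces the identity, contradicting $rs(\pi)=2$. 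So every admissible $2$-rise $\pi$ has $k\ge m-1$ with the first $m-1$ drops from $A$; moreover the case $k=m-1$ again collapses to the identity, so $k\ge m$. Then $\pi(m)$ is either $m$ (drop from $A$) or $k+1$ (drop from $B$).

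Next I would count these $2$-rise permutations by $v=\pi(m)$. For $v=m$, fixing cut $k\ge m$ and the first $m$ drops from $A$ leaves $\binom{n-m}{k-m}$ interleavings of the remaining cards, of which exactly one (all-$A$-first) gives the identity and must be removed; summing,
\[
\#\{\pi:rs(\pi)=2,\ \pi(i)=i\text{ for }i\le m\}=\sum_{k=m}^{n}\!\left(\binom{n-m}{k-m}-1\right)=2^{n-m}-(n-m+1).
\]
For $v>m$, the cut is forced to $k=v-1\ge m$, the first $m-1$ drops are from $A$ and the $m$th from $B$, and the remaining $n-m$ positions admit $\binom{n-m}{v-m}$ interleavings (each automatically $2$-rise since $\pi(m)=k+1\ne m$). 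This gives
\[
N_m=(n+1)+2^{n-m}-(n-m+1)=m+2^{n-m},\qquad N_v=\binom{n-m}{v-m}\quad(v>m).
\]

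Finally I would compare: for $v>m$ and $n-m\ge 1$, the binomial $\binom{n-m}{v-m}$ is one summand of $2^{n-m}=\sum_j\binom{n-m}{j}$ while $\binom{n-m}{0}=1$ is another positive summand, so $\binom{n-m}{v-m}\le 2^{n-m}-1$, whence $N_m-N_v\ge m+1>0$; the edge case $m=n$ is trivial since only $v=m$ is possible. The main obstacle is the bookkeeping in the classification step, specifically identifying which $(k,\text{interleaving})$ pairs collapse to the identity so as not to double-count its $(n+1)$-fold contribution against the $2$-rise enumeration.
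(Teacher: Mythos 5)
Your argument is correct, but it takes a more enumerative route than the paper's. The paper computes only the single quantity $\Pr(\pi([m])=id([m]))=(2^{n-m}+m)/2^n$ (your $N_m/2^n$, obtained there by identifying the $2$-rise permutations fixing the first $m$ positions with the count $R_{n-m}$ from Corollary~\ref{rs}, rather than by summing over cuts), and then observes that the ratio $\frac{2^{n-m}+m}{2^{n-m+1}+m-1}$ exceeds $1/2$; since a conditional probability greater than $1/2$ is automatically the unique maximizer, no analysis of the alternative values $v\neq m$ is needed at all. You instead determine the full conditional distribution: the forced-cut argument showing that a $2$-rise $\pi$ with $\pi(i)=i$ for $i<m$ and $\pi(m)=v>m$ must arise from the cut $k=v-1$, the resulting count $N_v=\binom{n-m}{v-m}$, and the bound $\binom{n-m}{v-m}\le 2^{n-m}-1$ are all sound, as is your bookkeeping of which (cut, interleaving) pairs collapse to the identity. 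The trade-off: the paper's proof is shorter and sidesteps the classification of admissible cuts entirely, while yours yields the exact law of $a_m$ given $a_i=i$ for $i<m$ (information the paper only extracts later, in a different guise, when analyzing the interleaving stage in the proof of Proposition~\ref{algorithm_optimal}), at the cost of the more delicate case analysis you correctly flag as the main obstacle.
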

\begin{proof}
Let the true underlying permutation be $\pi$ and $$I_m=\set{\pi|\pi([m])=id([m])\text{ and }rs(\pi)=2}$$ 
then by Corollary~\ref{rs} we have
$$|I_m|=|\set{\pi|rs(\pi[m+1,...,n])=2}|=R_{n-m}=2^{n-m}-n+m-1.$$
By Theorem~\ref{riffle_Q}
$$\Pr(\pi([m])=id([m]))=\Pr(\pi=id)+\sum_{\pi\in I_m}\Pr(\pi)=\frac{n+1}{2^n}+|I_m|\frac{1}{2^n}=\frac{2^{n-m}+m}{2^n}.$$
Thus
\begin{align*} \Pr(\pi(m)=m|\pi([m-1])=id([m-1]))&=\frac{\Pr(\pi([m])=id([m]))}{\Pr(\pi([m-1])=id([m-1]))}\\
&=\frac{2^{n-m}+m}{2^{n-m+1}+m-1}>\frac{1}{2}
\end{align*}
which gives  $\arg\max_l P(a_m=l|a_i=i\text{ for }1\leq i<m)=m$. 
\end{proof}
With the above lemma, it's easy to prove that our algorithm is optimal.

\begin{proof}[Proof of Proposition ~\ref{algorithm_optimal}]
By Theorem~\ref{riffle_P}, we have $P_{11}=\frac{1+2^{n-1}}{2^n}>\frac{1}{2}$, thus at the first step, the best guess should be 1. By Lemma~\ref{consecutive} and induction, if the previous revealed sequence is consecutive, the best guess should be the next card in the consecutive sequence. Finally, if the revealed sequence $[1,2,....,m, k]$ became non-consecutive when $k$ is flipped, by step 2 of GSR model and Lemma~\ref{interleave_lemma}, we know that the remaining deck is formed by interleaving $A=\set{m+1,...,k-1}$ and $B=\set{k+1,...,n}$ uniformly. Then we have
$$\Pr(\text{next card=}m+1)=\frac{\binom{|A|+|B|-1}{|A|-1}}{\binom{|A|+|B|}{|A|}}=\frac{|A|}{|A|+|B|}.$$
Thus in this stage the best guess is  the beginning of the longer sequence.
\end{proof}
The next proposition gives the expected reward.
\begin{proposition}\label{algo_return}
Algorithm~\ref{riffle_algo} has expected reward $n/2+\sqrt{2/\pi}\cdot\sqrt{n}+O(1).$
\end{proposition}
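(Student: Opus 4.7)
The plan is to decompose the expected reward by the two stages of Algorithm~\ref{riffle_algo}. Let $M$ denote the length of the initial ``identity'' prefix of the shuffled deck (so $M = n$ iff the deck is the identity), and, when $M < n$, let $K$ be the breaking card at position $M+1$, so that the piles used in Stage~3 have sizes $|A| = K - M - 1 \ge 1$ and $|B| = n - K \ge 0$. A direct count of interleavings combined with Theorem~\ref{riffle_Q} gives
\[
\Pr(|A| = a,\ |B| = b) \;=\; \frac{\binom{a+b}{a}}{2^n}, \qquad a \ge 1,\ b \ge 0,\ a+b \le n-1,
\]
while the realized reward on such an outcome is $M + f(a,b) = (n-a-b-1) + f(a,b)$, where $f(a,b)$ is the expected Stage~3 reward on piles of sizes $a$ and $b$. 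Hence
\[
\E[R] \;=\; \frac{n(n+1)}{2^n} + \sum_{a,b}\frac{\binom{a+b}{a}}{2^n}\bigl[(n-a-b-1) + f(a,b)\bigr].
\]

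First I would split $f(a,b) = (a+b)/2 + g(a,b)$, separating the ``random guess'' reward from the ``boost'' $g(a,b)$ due to the majority-pile heuristic. The non-$g$ piece combined with the identity contribution $n(n+1)/2^n$ reduces via elementary manipulations (using $\sum_{a=1}^s \binom{s}{a} = 2^s-1$ and standard evaluations of sums like $\sum_{s=1}^{n-1} s\cdot 2^s$) to $n/2 + o(1)$, so the proposition reduces to establishing
\[
S_3 \;:=\; \sum_{a,b} g(a,b)\,\frac{\binom{a+b}{a}}{2^n} \;=\; \sqrt{2n/\pi} + O(1).
\]

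Second, I would derive a workable formula for $g(a,b)$. At a Stage~3 step with $s$ cards remaining, the correctness probability is $\max(X,Y)/s = 1/2 + |2X-s|/(2s)$, where $X$---the number of $A$-cards among the last $s$ positions of the uniform interleaving---has distribution $\mathrm{Hypergeom}(a+b,a,s)$. Summing over $s$ yields $g(a,b) = \tfrac12\sum_{s=1}^{a+b}\E[|2X_s^{a,b}-s|]/s$. Substituting into $S_3$ and swapping the order of summation, I would fix $s$ and the realized value $x$ of $X_s^{a,b}$, use the identity $\binom{a+b}{a}\Pr(X_s^{a,b}=x) = \binom{s}{x}\binom{a+b-s}{a-x}$, and collapse the remaining sum over $(a,b)$ via $\sum_{T=0}^{n-1-s}2^T = 2^{n-s}-1$. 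After absorbing the $x=0$ boundary (where the constraint $a\ge 1$ slightly adjusts the count) into the error, one arrives at
\[
S_3 \;=\; \tfrac12\sum_{s=1}^{n-1}\frac{W(s)}{s\cdot 2^s} + o(1), \qquad W(s) \;:=\; \sum_{x=0}^s|2x-s|\binom{s}{x}.
\]
The closed form $W(2m) = 2m\binom{2m}{m}$ (with an analogous odd-$s$ expression) together with Stirling gives $W(s)/(s\cdot 2^s) = \sqrt{2/(\pi s)} + O(s^{-3/2})$; then $\sum_{s=1}^{n-1}s^{-1/2} = 2\sqrt n + O(1)$ and $\sum_{s\ge 1}s^{-3/2}<\infty$ combine to yield $S_3 = \sqrt{2n/\pi} + O(1)$, finishing the proof.

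The hard part will be the $O(1)$ bookkeeping: because the target asymptotic is $\sqrt{2n/\pi}+O(1)$ rather than $\sqrt{2n/\pi}(1+o(1))$, the $x=0$ boundary contribution, the small-$s$ regime where the Stirling estimate of $W(s)$ is inaccurate, the Euler--Maclaurin error in $\sum s^{-1/2}$, and the identity term $n(n+1)/2^n$ must each be controlled individually. A clean way to organize this is via the sharp estimate $W(s)/2^s = \sqrt{2s/\pi}(1+O(1/s))$, so every approximation contributes only to an absolutely convergent error series.
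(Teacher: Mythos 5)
Your proposal is correct, and it takes a genuinely different route from the paper. The paper conditions on the identity of the first revealed card and uses the recursion $R^*(f_n)=P_{11}\bigl(1+R^*(g_{n-1})\bigr)+\sum_{k\ge 2}P_{k1}f(k-1,n-k)$, which forces it to compare the conditional shuffle $g_{n-1}$ with $f_{n-1}$ via a total-variation estimate (Lemma~\ref{fg}), to set up and solve recursions for $S(n)=\sum_k\binom{n}{k}f(k,n-k)$ and $F(n)=S(n)/2^{n+1}$, and finally to run an induction to pin down the $O(1)$ term in $G(n)=\tfrac12 G(n-1)+F(n-1)+\tfrac12+o(1)$. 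You instead condition once and for all on the length of the identity prefix and the break card, which gives the exact pile-size law $\binom{a+b}{a}/2^n$, and you replace the paper's recursion \eqref{recursive_f} for $f(a,b)$ by the closed form $f(a,b)=\tfrac{a+b}{2}+\tfrac12\sum_s \E\,|2X_s-s|/s$ with $X_s$ hypergeometric; the Vandermonde collapse $\binom{a+b}{a}\Pr(X_s=x)=\binom{s}{x}\binom{a+b-s}{a-x}$ then reduces everything to $\tfrac12\sum_s W(s)/(s2^s)$ with $W(s)=\sum_x|2x-s|\binom{s}{x}$. Since $W(2m)=2m\binom{2m}{m}$, your key quantity $W(s)/(s2^s)$ is the paper's sequence $a_s$ (Definition~\ref{sequence_def}) up to a $1+O(1/s)$ factor, so both arguments ultimately rest on the same central-binomial asymptotics (Lemma~\ref{sequence_sum}). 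What your route buys: no need for the $f_n$-versus-$g_n$ comparison or the closing induction, and the $O(1)$ bookkeeping is cleaner because every approximation error (the $x=0$ boundary term, the identity contribution $n(n+1)/2^n$, the odd-$s$ correction) sits inside an absolutely convergent or exponentially small series. What the paper's route buys: the recursive machinery is reusable, e.g.\ for the variance computation sketched in the open problems section.
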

In proving this proposition, we first introduce the following  sequence which was also appears in  Ciucu \cite{ciucu}.
\begin{definition}
\label{sequence_def}
Let the sequence $\set{a_i}_{i=0}^{\infty}$ be such that $a_i=\frac{1}{2^i}\binom{i}{\floor{i/2}}$.
\end{definition}
\cite{ciucu} proves a lemma about $\sum_{i=0}^{\infty}a_i$ and we prove something similar with a more refined asymptotic bound. 
\begin{lemma}
\label{sequence_sum}
Let $A_n=\sum_{i=0}^na_i$, then $A_n=c\sqrt{n}+O(1)$ where the constant $c=\sqrt{\frac{8}{\pi}}\approx1.60$.
\end{lemma}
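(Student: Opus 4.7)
The plan is to reduce the sum to a sum over even indices only, apply Stirling's formula term-by-term, and then invoke the standard asymptotic for $\sum k^{-1/2}$. Concretely, I would proceed as follows.

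First I would observe the elementary identity $a_{2k+1}=a_{2k+2}$ for all $k\ge 0$. This follows from $\binom{2k+1}{k}=\binom{2k+1}{k+1}$, which gives
\[
a_{2k+1}=\frac{1}{2^{2k+1}}\binom{2k+1}{k}=\frac{1}{2^{2k+1}}\cdot\frac{1}{2}\binom{2k+2}{k+1}=\frac{1}{2^{2k+2}}\binom{2k+2}{k+1}=a_{2k+2}.
\]
Pairing consecutive terms then yields, for $n=2m$,
\[
A_{2m}=a_0+\sum_{k=1}^{m}(a_{2k-1}+a_{2k})=1+2\sum_{k=1}^{m}a_{2k},
\]
and an analogous expression for odd $n$ that differs by an $O(1)$ term (namely the unpaired $a_{2m+1}$, which is $o(1)$). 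So it suffices to estimate $\sum_{k=1}^{m}a_{2k}$.

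Next I would apply Stirling's formula to the central binomial coefficient. The standard expansion $\binom{2k}{k}=\frac{4^k}{\sqrt{\pi k}}\bigl(1+O(k^{-1})\bigr)$ gives
\[
a_{2k}=\frac{1}{4^k}\binom{2k}{k}=\frac{1}{\sqrt{\pi k}}+O(k^{-3/2}).
\]
Since $\sum_{k=1}^{\infty}k^{-3/2}$ converges, the error terms contribute only an $O(1)$ amount to the partial sum. Combined with the classical Euler--Maclaurin estimate $\sum_{k=1}^{m}k^{-1/2}=2\sqrt{m}+O(1)$, this gives
\[
\sum_{k=1}^{m}a_{2k}=\frac{1}{\sqrt{\pi}}\sum_{k=1}^{m}k^{-1/2}+O(1)=\frac{2\sqrt{m}}{\sqrt{\pi}}+O(1).
\]

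Plugging back in, for $n=2m$ I would get
\[
A_n=1+\frac{4\sqrt{m}}{\sqrt{\pi}}+O(1)=\frac{4}{\sqrt{2\pi}}\sqrt{n}+O(1)=\sqrt{\tfrac{8}{\pi}}\sqrt{n}+O(1),
\]
and the odd case differs by $O(1)$ from the even case since $a_{2m+1}=O(m^{-1/2})$ and $\sqrt{2m+1}-\sqrt{2m}=O(m^{-1/2})$. This verifies the asymptotic with the stated constant $c=\sqrt{8/\pi}$. There is no real obstacle here; the only thing to be careful about is bookkeeping the constant through the substitution $m=\lfloor n/2\rfloor$, and making sure the Stirling error and the Euler--Maclaurin tail are both genuinely $O(1)$ rather than $o(\sqrt n)$, which is why the expansion needs to be carried to the $k^{-3/2}$ correction.
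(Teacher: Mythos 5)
Your proposal is correct and follows essentially the same route as the paper: pair the equal terms $a_{2k-1}=a_{2k}$, apply Stirling to the central binomial coefficient to get $a_{2k}=\tfrac{1}{\sqrt{\pi k}}+O(k^{-3/2})$, and sum via $\sum_{k\le m}k^{-1/2}=2\sqrt{m}+O(1)$. The only cosmetic differences are that the paper carries explicit Stirling error bounds $\lambda_n$ and disposes of odd $n$ by the monotonicity $A_{2n}\le A_{2n+1}\le A_{2n+2}$ rather than by bounding the unpaired term directly.
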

\begin{proof}
By Sterling's formula of the form $n!=\sqrt{2\pi n}(\frac{n}{e})^ne^{\lambda_n}$ where $\frac{1}{12n+1}<\lambda_n<\frac{1}{12n}$, we have
\begin{align*}
a_{2i}&=\frac{(2i)!}{2^{2i}i!i!}=\frac{\sqrt{4\pi i}(\frac{2i}{e})^{2i}e^{\lambda_{2i}}}{2^{2i}\Big(\sqrt{2\pi i}(\frac{i}{e})^ie^{\lambda_i}\Big)^2}\\
&=\frac{e^{\lambda_{2i}}}{\sqrt{\pi i}e^{2\lambda_i}}=\frac{1}{\sqrt{\pi i}}e^{\lambda_{2i}-2\lambda_i}.   
\end{align*}
By the bound on $\lambda_n$, we have
\begin{align*}
&\frac{1}{24i+1}-\frac{2}{12i}<\lambda_{2i}-2\lambda_i<\frac{1}{24i}-\frac{2}{12i+1}\\
&\Rightarrow \lambda_{2i}-2\lambda_i=\Theta(\frac{1}{i})\Rightarrow e^{\lambda_{2i}-2\lambda_i}=1+O(\frac{1}{i})\\
&\Rightarrow a_{2i}=\frac{1}{\sqrt{\pi i}}+O(\frac{1}{i^{\frac{3}{2}}}).  
\end{align*}
where the second last equality is by the expansion of $e^x=\sum_{i=0}^{\infty}\frac{x^i}{i!}$.\\
We have $a_{2i}=a_{2i-1}$  and since the  series $\sum_{i=0}^n\frac{1}{i^{\frac{3}{2}}}$ converges,
$$A_{2n}=\sum_{i=1}^{2n}a_{i}+O(1)=2\sum_{i=1}^na_{2i}+O(1)=\frac{2}{\sqrt{\pi}}\sum_{i=1}^{n}\frac{1}{\sqrt{i}}+O(1).$$
To estimate the series $\sum_{i=1}^{\infty}\frac{1}{\sqrt{i}}$, we have
\begin{align*}
 &2\sqrt{n+1}-2=\int_{1}^{n+1}\frac{1}{\sqrt{x}}dx\leq \sum_{i=1}^n\frac{1}{\sqrt{i}}\leq 1+\int_{1}^n\frac{1}{\sqrt{x}}dx=2\sqrt{n}-1\\
&\Rightarrow A_{2n}=\sqrt{\frac{8}{\pi}}\sqrt{2n}+O(1).
\end{align*}
Since $A_{2n}\leq A_{2n+1}\leq A_{2n+2}$, we have $A_n=\sqrt{\frac{8}{\pi}}\cdot\sqrt{n}+O(1)$.
\end{proof}

Let $f_n$ denote the riffle shuffle algorithm for $n$ cards, now we attempt to compute the maximal expected reward $R^*(f_n)$. It's hard to directly compute this using Equation~\eqref{direct}. Instead, we use the recursive Equation~\eqref{recursive}. For $k\geq 2$, the conditional shuffling $f^{(k)}_n$ is just interleaving uniformly which is an easy object to deal with (recall that in section 1.4 we define $f^{(k)}_n$ to the random permutation on the remaining $n-1$ cards given that the first card is $k$). If $k=1$, then we note that the strategy on the remaining cards is a relabelling of Algorithm~\ref{riffle_algo} on $n-1$ cards. Define $g_n=f^{(1)}_{n+1}$ to be the shuffling(a random permutation) on the remaining $n$ cards when the first card is revealed to be 1 after the riffle shuffle.
Then  $g_{n-1}=f^{(1)}_n$ shares the same optimal strategy with $f_{n-1}$ after reducing every card number by 1. Thus we conjecture that $g_n$ and $f_n$ may be very similar and indeed this conjecture is true in the following sense.
\begin{lemma}
\label{fg}
\begin{enumerate}[(a)]
\item 
The optimal guessing algorithm for $g_n$ is the same as $f_n$.
\item
$R^*(g_n)=R^*(f_n)+o(1)$.
\end{enumerate}
\end{lemma}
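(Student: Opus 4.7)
The plan is to derive $Q_{g_n}$ explicitly by a Bayes computation and then exploit its near-equality with $Q_{f_n}$.

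First I would set up a bijection between the supports of the two distributions. For any $\pi \in S_n$, define $\sigma \in S_{n+1}$ by $\sigma(1) = 1$ and $\sigma(i) = \pi(i-1) + 1$ for $i \geq 2$. A short case check shows $\sigma = id$ iff $\pi = id$, and $rs(\sigma) = 2$ iff $rs(\pi) = 2$; hence $\sigma$ lies in the support of $f_{n+1}$ precisely when $\pi$ lies in the support of $f_n$. Combining $Q_{f_{n+1}}$ from Theorem~\ref{riffle_Q} with $P^{f_{n+1}}_{11} = (2^n+1)/2^{n+1}$ from Theorem~\ref{riffle_P}, Bayes' rule gives
\[
Q_{g_n}(\pi) \;=\; \frac{Q_{f_{n+1}}(\sigma)}{P^{f_{n+1}}_{11}} \;=\; \begin{cases}\dfrac{n+2}{2^n+1} & \pi = id,\\[4pt] \dfrac{1}{2^n+1} & rs(\pi) = 2,\\[4pt] 0 & \text{otherwise,}\end{cases}
\]
which is essentially $Q_{f_n}$ with slightly inflated mass on the identity.

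For part (a), I would replay the proofs of Lemma~\ref{consecutive} and Proposition~\ref{algorithm_optimal} with $Q_{g_n}$ replacing $Q_{f_n}$. The consecutive stage requires only a one-line variant of Lemma~\ref{consecutive}: under $Q_{g_n}$, the analogous conditional probability becomes $(2^{n-m}+m+1)/(2^{n-m+1}+m)$, which is again strictly greater than $1/2$, so guessing $m$ at step $m$ remains optimal. The separating pile stage is unchanged because once the revealed prefix becomes non-consecutive, the cut position is forced, and $Q_{g_n}$ still weights all compatible 2-rising permutations equally, so the conditional distribution on the remaining deck is a uniform interleaving of the two resulting piles, exactly as under $f_n$.

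For part (b), I would compare via total variation. A direct computation gives
\[
2\,TV(Q_{g_n}, Q_{f_n}) = \left|\frac{n+2}{2^n+1} - \frac{n+1}{2^n}\right| + (2^n - n - 1)\left|\frac{1}{2^n+1} - \frac{1}{2^n}\right| = \frac{2(2^n - n - 1)}{2^n(2^n+1)} = O(2^{-n}).
\]
Since any reward lies in $[0, n]$, the equivalent definition of $TV$ from Section~1.2 yields $|\E_{g_n} R(\cdot, \lambda) - \E_{f_n} R(\cdot, \lambda)| \leq n \cdot 2\,TV = O(n\, 2^{-n})$ for every strategy $\lambda$. Applying this bound to the optimal strategy for each distribution sandwiches $|R^*(g_n) - R^*(f_n)| = O(n\, 2^{-n}) = o(1)$. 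The only genuinely new computation is the Bayes derivation of $Q_{g_n}$; once that is in hand, both (a) and (b) reduce to mechanical transfers of arguments already in Section~3, so the main (modest) obstacle is simply getting the bijection $\pi \leftrightarrow \sigma$ and its behavior on the rising-sequence statistic correct.
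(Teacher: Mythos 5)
Your proposal is correct and follows essentially the same route as the paper: the same bijection $\pi\mapsto\sigma$, the same Bayes computation of $Q_{g_n}$, the same total variation bound $TV(Q_{f_n},Q_{g_n})=\frac{2^n-n-1}{2^n(2^n+1)}$, and the same $O(n2^{-n})$ comparison of expected rewards. The only (harmless) variations are that for (a) you re-verify optimality directly under $Q_{g_n}$ where the paper simply restricts the optimal strategy for $f_{n+1}$ to the sub-game after the first card is revealed to be $1$, and for (b) you sandwich over strategies rather than invoking (a) to identify the reward functions.
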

\begin{proof}
For (a), since Algorithm~\ref{riffle_algo} is optimal by Proposition~\ref{algorithm_optimal}, the optimal strategy for $g_n$ is that algorithm applied when the first card is 1. Thus it is isomorphic to the optimal strategy for $f_{n}$ after reducing every card number by 1.\\
For (b), given any permutation $\pi$ on $[n]$, we denote $\pi'$ to be a permutation on $[n+1]$ such that $\pi'(1)=1$ and $\pi'(i)=\pi(i-1)+1$. Basically $\pi'$ acts on $[2,...,n+1]$ the same as $\pi$ acts on $[1,...,n]$.\\
First, we compute the distribution $Q_{g_n}$ on $S_n$. By Theorem~\ref{riffle_Q} and Theorem~\ref{riffle_P}, we have
\begin{align*}
 Q_{g_n}(id_{[n]})&=\text{Pr}_{f_{n+1}}(id_{[n+1]}|\text{ the first card is 1})\\
 &=\frac{Q_{f_{n+1}}(id_{[n+1]})}{P^{f_{n+1}}_{11}}=\frac{n+2}{2^n+1}. 
\end{align*}
For any $\pi$ with two rising sequences, $\pi'$ also satisfies $rs(\pi')=2$, thus we have
$$Q_{g_n}(\pi)=\text{Pr}(\pi'|\pi'(1)=1)=\frac{Q_{f_{n+1}}(\pi')}{P^{f_{n+1}}_{11}}=\frac{1}{2^n+1}.$$
Thus the total variation distance of their distributions satisfies
\begin{align*}
TV(Q_{f_n},Q_{g_n})&=\frac{1}{2}\sum_{\pi\in S_n}|Q_{f_n}(\pi)-Q_{g_n}(\pi)|=\frac{1}{2}|Q_{f_n}(id)-Q_{g_n}(id)|+\frac{1}{2}\sum_{\pi\in I_n}|Q_{f_n}(\pi)-Q_{g_n}(\pi)|\\
&=\frac{1}{2}|Q_{f_n}(id)-Q_{g_n}(id)|+\frac{1}{2}|I_n|\cdot|\frac{1}{2^n}-\frac{1}{2^n+1}|=\frac{2^n-n-1}{(2^n+1)2^n}.    
\end{align*}
Recall that in section 1.4 we defined $R^*(\pi, f_n)$ as the reward of the optimal strategy for $f_n$ when the underlying permutation is $\pi$. Part $(a)$ tells us that $R^*(\cdot, f_n)=R^*(\cdot, g_n)$ as functions on $S_n$. Since $R^*(\cdot, f_n)\leq n$, by the equivalent definition of total variation distance we have
\begin{align*}
|R^*(f_n)-R^*(g_n)|&=|\sum_{\pi\in S_n}R^*(\pi,f_n)Q_{f_n}(\pi)-\sum_{\pi\in S_n}R^*(\pi,f_n)Q_{g_n}(\pi)|\\
&\leq 2n\cdot TV(Q_{f_n},Q_{g_n})=\frac{2n(2^n-n-1)}{2^n(2^n+1)}\leq\frac{n}{2^{n-1}}=o(1).\\
\end{align*}
Thus we have $ R^*(g_n)=R^*(f_n)+o(1)$.
\end{proof}
With the above lemma and Equation~\eqref{recursive} , we can establish a recursive relationship for the sequence $\set{R^*(f_n)}$ and prove our proposition.
\begin{proof}[Proof of Proposition~\ref{algo_return}]
For two piles $A$ and $B$ with size $a$ and $b$ interleaved uniformly, let $f(a,b)$ denote the maximum expected reward. Then by  Equation~\eqref{recursive},
\begin{equation}\label{recur_1}
  R^*(f_n)=P_{11}+P_{11}R^*(g_{n-1})+\sum_{k=2}^nP_{k1}f(k-1,n-k).  
\end{equation}
Let $G(n)=R^*(f_n)$, by Theorem~\ref{riffle_P} and Lemma~\ref{fg} we have
$$G(n)=\frac{1+2^{n-1}}{2^n}\big(1+G(n-1)+o(1)\big)+\sum_{k=2}^n\frac{\binom{n-1}{k-1}}{2^n}f(k-1,n-k).$$
Define
$$S(n)=\sum_{k=1}^n\binom{n}{k}f(k,n-k)\text{  and   }F(n)=\frac{S(n)}{2^{n+1}}.$$
Since $G(n)\leq n$ and $\lim_{n\rightarrow\infty} G_n/2^n=0$, we have
\begin{equation}
\label{recursive_G}
G(n)=\frac{1}{2}G(n-1)+F(n-1)+\frac{1}{2}+o(1) 
\end{equation}
with condition $G(1)=1$.\\
For $f(a,b)$, by considering where the next card comes from, we have 
\begin{equation}
\label{recursive_f}
 f(a,b)=\max\set{\frac{a}{a+b},\frac{b}{a+b}}+\frac{b}{a+b}f(a,b-1)+\frac{a}{a+b}f(a-1,b) \end{equation}
with conditions $f(a,b)=f(b,a)$ and $f(a,0)=f(0,a)=a$.\\
Next, we attempt to solve the above recursive formulas, by Equation~\eqref{recursive_f} we have
\begin{align*}
S(n)&=n+\sum_{k=1}^{n-1}\binom{n}{k}(\max\set{\frac{k}{n},1-\frac{k}{n}}+\frac{k}{n}f(k-1,n-k)+\frac{n-k}{n}f(k,n-k-1))\\
&=n+\sum_{k=1}^{n-1}\binom{n}{k}\max\set{\frac{k}{n},1-\frac{k}{n}}+\sum_{k=1}^{n-1}\binom{n}{k}\frac{k}{n}f(k-1,n-k)+\sum_{k=1}^{n-1}\binom{n}{k}\frac{n-k}{n}f(k,n-k-1)\\
&=n+\sum_{k=1}^{n-1}\binom{n}{k}\max\set{\frac{k}{n},1-\frac{k}{n}}+\sum_{k=1}^{n-1}\binom{n-1}{k-1}f(k-1,n-1-(k-1))\\
&+\sum_{k=1}^{n-1}\binom{n-1}{k}f(k,n-1-k)\\
&=n+\sum_{k=1}^{n-1}\binom{n}{k}\max\set{\frac{k}{n},1-\frac{k}{n}}+2S(n-1)\\
&=n+\sum_{k=1}^{\floor{\frac{n}{2}}}\binom{n}{k}\frac{n-k}{n}+\sum_{k=\floor{\frac{n}{2}}+1}^{n-1}\binom{n}{k}\frac{k}{n}+2S(n-1)\\
&=n+\sum_{k=1}^{\floor{\frac{n}{2}}}\binom{n-1}{k}+\sum_{k=\floor{\frac{n}{2}}+1}^{n-1}\binom{n-1}{k-1}+2S(n-1)\\
&=n+\sum_{k=1}^{n-2}\binom{n-1}{k}+\binom{n-1}{\floor{\frac{n}{2}}}+2S(n-1)\\
&=2^{n-1}+\binom{n-1}{\floor{\frac{n-1}{2}}}+n-2+2S(n-1).
\end{align*}
Thus we end up with the following recursive formula for $S(n)$
$$S(n)=2^{n-1}+\binom{n-1}{\floor{\frac{n-1}{2}}}+n-2+2S(n-1)$$
which gives
\begin{align*}
F(n)&=\frac{1}{4}+\frac{\binom{n-1}{\floor{\frac{n-1}{2}}}+n-2}{2^{n+1}}+F(n-1)\\
&=\frac{1}{4}+\frac{n-2}{2^{n+1}}+\frac{a_{n-1}}{4}+F(n-1)\\
&\Rightarrow F(n)=\frac{n}{4}+\frac{\sum_{k=1}^{n-1}a_k}{4}+O(1).    
\end{align*}
By Lemma~\ref{sequence_sum} we have
\begin{equation}\label{F_recursive}
F(n)=\frac{n}{4}+\sqrt{\frac{n}{2\pi}}+O(1).
\end{equation}
Recall that from earlier we have
\begin{equation}\label{G_recursive}
G(n)=\frac{1}{2}G(n-1)+F(n-1)+\frac{1}{2}+o(1).
\end{equation}
Next, we prove that $G(n)=\frac{1}{2}\cdot n+\sqrt{\frac{2}{\pi}}\cdot\sqrt{n}+O(1)$.\\
By Equation~\eqref{F_recursive}, there is a constant $c_0$ such that $F(n)=\frac{n}{4}+\sqrt{\frac{n}{2\pi}}+x_n$ and $|x_n|\leq c_0$ for all $n$. By Equation~\eqref{G_recursive}, there is a constant $c_1$ such that $G(n)=\frac{1}{2}G(n-1)+F(n-1)+y_n$ and $|y_n|\leq c_1$ for all $n$.\\ 
Suppose $G(n)=\frac{1}{2}\cdot n+\sqrt{\frac{2}{\pi}}\cdot\sqrt{n}+z_n$ let $C=\max\set{2c_0+2c_1+2, |z_1|}$, we prove by induction that $|z_n|\leq C$.\\ By our choice of $C$, the base case of $n=1$ is true. Now suppose the statement holds for $n$, consider $n+1$. Then we have
\begin{align*}
 G(n+1)&=\frac{1}{2}G(n)+F(n)+y_{n+1}=\frac{1}{2}(\frac{1}{2}\cdot n+\sqrt{\frac{2}{\pi}}\cdot\sqrt{n}+z_n)+\frac{n}{4}+\sqrt{\frac{n}{2\pi}}+x_n+y_{n+1}\\
&=\frac{n}{2}+\sqrt{\frac{2}{\pi}}\cdot\sqrt{n}+\frac{z_n}{2}+x_n+y_{n+1}\\
&=\frac{n+1}{2}+\sqrt{\frac{2}{\pi}}\cdot\sqrt{n+1}-\frac{1}{2}-\sqrt{\frac{2}{\pi}}(\sqrt{n+1}-\sqrt{n})+\frac{z_n}{2}+x_n+y_{n+1}\\
&\Rightarrow z_{n+1}=\frac{z_n}{2}+x_n+y_{n+1}-\frac{1}{2}-\sqrt{\frac{2}{\pi}}(\sqrt{n+1}-\sqrt{n})\\
&\Rightarrow |z_{n+1}|\leq|\frac{z_n}{2}|+|x_n|+|y_{n+1}|+\frac{1}{2}+\sqrt{\frac{2}{\pi}}\frac{1}{\sqrt{n}+\sqrt{n+1}}\\
&\leq\frac{C}{2}+c_0+c_1+1\leq C.
\end{align*}
Thus the statement holds for $n+1$, which means $|z_n|\leq C$ for all $n$, thus we have the expected reward is
$$G(n)=n/2+\sqrt{2/\pi}\cdot\sqrt{n}+O(1).$$
\end{proof}
\begin{remark}
Currently our approximation technique is not refined enough to find a constant $c$ to replace the $O(1)$ term by $c+o(1)$. By doing numerical calculation for $1\leq n\leq 10000$, the error term is below 0.5.
\end{remark}
\section{Open problems}
The notation in this section is the same as in section 3.
\begin{enumerate}[1.]
\item 
Calculating the variance of the number of correct guesses under optimal strategy. Since we already have a formula for the expected value $R^*(f_n)$, it suffices to compute the following expected square sum
\begin{equation}
T^*(f_n) := \sum_{\pi\in S_n}R^*(\pi,f_n)^2Q_f(\pi).    
\end{equation}
It satisfies
$$
T^*(f_n)=P_{11}(1+R^*(g_{n-1}))^2+\sum_{k=2}^nP_{k1}f(k-1,n-k)^2$$
$$=\frac{1+2^{n-1}}{2^n}(1+R^*(f_{n-1}))^2+\sum_{k=2}^n\frac{\binom{n-1}{k-1}}{2^n}f(k-1,n-k)^2+o(1).$$
Thus it remains to estimate
$$F'(n):=\sum_{k=2}^n\binom{n-1}{k-1}f(k-1,n-k)^2.$$
Currently we are not able to  get a good estimate of $F'$ since an expansion using Equation~\eqref{recursive_f} would involve cross terms and be very complicated to solve.
\item
Generalize to $k$ riffle shuffles $f^{(k)}_n$. A natural next step is to prove an optimal guessing strategy and find its expected number of correct guesses for a deck of cards riffle shuffled for $k\geq2$ times. A conjectured optimal strategy is given in ~\cite{lair}. The main difficulty in extending the current method to the general setting is the following. With $k=1$, the two piles of cards before interleaving would be fully determined once the first non-consecutive card appears. For larger $k$, the natural generalization of this would be based on the sequential description of $a$-shuffle formulated in~\cite{lair} on page 299. However, the original card piles before interleaving are much harder to be determined by  the revealed cards if $k$ is large.
\item
Generalize to yes or no feedback. For the case of uniform shuffle, it is solved in~\cite{graham}. However, this question remains wide open for riffle shuffle, primarily because the conditional shuffle of receiving negative feedback is less understood.
\end{enumerate}
\section{Acknowledgements}
We would like to  thank Pr.Diaconis for teaching a class at Stanford University through which I get to know this interesting topic and providing some references. We thank Anita for proofreading and providing valuable feedback. We also thank the anonymous reviewer for helpful comments and suggestions.

\end{document}